\theoremstyle{definition}
\newcommand{\Gal}{\operatorname{Gal}}
\newcommand{\End}{\operatorname{End}}
\newcommand{\Z}{\mathbb{Z}}
\newcommand{\Q}{\mathbb{Q}}
\newcommand{\F}{\mathbb{F}}
\newtheorem{theorem}{Theorem}[section]
\newtheorem{proposition}[theorem]{Proposition}
\newtheorem{lemma}[theorem]{Lemma}
\newtheorem{conjecture}[theorem]{Conjecture}
\newtheorem*{conjecture*}{Conjecture}
\newcommand{\changeurlcolor}[1]{\hypersetup{urlcolor=#1}} 
\newcommand{\thickhline}{%
    \noalign {\ifnum 0=`}\fi \hrule height 1pt
    \futurelet \reserved@a \@xhline
}
\newcolumntype{"}{@{\hskip\tabcolsep\vrule width 1pt\hskip\tabcolsep}}
\title{\large{\textbf{ON A NUMBER OF ISOGENY CLASSES OF SIMPLE ABELIAN VARIETIES OVER FINITE FIELDS}}}
\author{\normalsize{JUNGIN LEE}}
\date{}
\newcommand\shorttitle{ON A NUMBER OF ISOGENY CLASSES OF SIMPLE ABELIAN VARIETIES OVER FINITE FIELDS}
\newcommand\authors{JUNGIN LEE}
\ifodd\value{page}
\authors
\shorttitle
\begin{document}
\maketitle

\vspace{-10mm}

\begin{abstract}
In this paper, we investigate the asymptotic behavior of the number $s_q(g)$ of isogeny classes of simple abelian varieties of dimension $g$ over a finite field $\F_q$. 
We prove that the logarithmic asymptotic of $s_q(g)$ is the same as the logarithmic asymptotic of the number $m_q(g)$ of isogeny classes of all abelian varieties of dimension $g$ over $\F_q$. 
We also prove that
$$
\limsup_{g \rightarrow \infty}\frac{s_q(g)}{m_q(g)}=1.
$$
This suggests that there are much more simple isogeny classes of abelian varieties over $\F_q$ of dimension $g$ than non-simple ones for sufficiently large $g$, which can be understood as the opposite situation to a main result of Lipnowski and Tsimerman (Duke Math 167:3403-3453, 2018). 
\end{abstract}


\section{Introduction} \label{Sec1}

An abelian variety over an arbitrary field $k$ is isogenous to a product of simple abelian varieties. 
One can naturally ask about the distribution of the dimensions of simple isogeny factors in the set of isogeny classes of abelian varieties over $k$ of given dimension. 
When $k$ is a prime field $\F_p$ and a variety is equipped with a principal polarization, there is a nice answer given by M. Lipnowski and J. Tsimerman \cite{LT}. 
Note that we may replace $0.99$ in the proposition below by any constant $c<1$. 

\begin{proposition} \label{prop11}
(\cite{LT}, Corollary 5.14) Under the assumption of \cite[Conjecture 5.2]{LT}, for a subset of primes $p$ of density at least $1-2^{-9}$, the proportion of principally polarized abelian varieties over $\F_p$ which admits an isogeny factor $E^h$ for some elliptic curve $E$ and $h \geq 0.99g$ approaches $1$ as $g \rightarrow \infty$.
\end{proposition}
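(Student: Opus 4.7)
The plan is to parametrize isogeny classes of $g$-dimensional abelian varieties over $\F_p$ by Weil $p$-polynomials of degree $2g$ via Honda--Tate theory, and to count principally polarized abelian varieties (PPAVs) in each isogeny class by a mass formula in the style of Ekedahl and Achter--Gordon--Williams. Writing $M(f)$ for the weighted count (by inverse automorphism orders) attached to a Weil polynomial $f$, the proposition becomes the assertion
\[
\frac{\sum_{f\ \text{good}} M(f)}{\sum_f M(f)} \longrightarrow 1 \quad \text{as } g \to \infty,
\]
where ``good'' means $f = f_E^h \cdot g_0$ for some elliptic Weil polynomial $f_E(x) = x^2 - ax + p$ and some $h \geq 0.99\, g$.

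The central mechanism is that a high-multiplicity irreducible factor in $f$ forces a large endomorphism algebra. When $f = f_E^h \cdot g_0$, the $h \times h$ matrix algebra over $\End^0(E)$ sits inside $\End^0(A)$, and the local part of the mass formula inherits a factor comparable to the Tamagawa volume of $\SL_h$ at $p$, which grows like $p^{c h^2}$ for some constant $c > 0$. Isogeny classes whose Weil polynomial has no repeated factor of multiplicity $\geq 0.99 g$ are, by contrast, controlled by class numbers and regulators of orders in CM fields of degree at most $2g$, and should vary much more tamely. Thus the dominant contribution to the total mass should come from the good isogeny classes.

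The proof then splits into two bounds. For a lower bound on the good sum, I would fix an ordinary elliptic curve $E/\F_p$, set $h = \lceil 0.99 g \rceil$, and sum $M(f_E^h \cdot g_0)$ over Weil polynomials $g_0$ of degree $2(g-h)$, harvesting the $p^{c h^2}$ factor. For an upper bound on the complementary sum, one needs uniform control over class numbers and regulators across all isogeny classes of dimension $g$; this is exactly what the conditional input \cite[Conjecture 5.2]{LT} should supply. The restriction to a set of primes of density at least $1-2^{-9}$ presumably comes from a union bound over a few local obstructions at $p$, such as the existence of an ordinary elliptic curve whose endomorphism ring makes the above lower-bound step effective.

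The main obstacle is the complementary bound: mass formulas are extremely sensitive to class numbers of non-maximal orders in CM fields, which fluctuate wildly and for which no unconditional upper bound matches the $p^{c h^2}$ growth from the good side. Taming this fluctuation is precisely why the argument must be conditional on the stated conjecture; unconditionally, tools such as Brauer--Siegel are far too weak to match the $p^{c h^2}$ scaling, so the whole comparison can only be made assuming a structural input about typical sizes of class groups and regulators in the relevant family.
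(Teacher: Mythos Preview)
The paper does not prove this proposition at all: it is quoted verbatim from \cite[Corollary~5.14]{LT} as motivational background in the introduction, and no argument for it is given anywhere in the present article. So there is no ``paper's own proof'' against which to compare your proposal.

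That said, your sketch is a fair high-level caricature of the Lipnowski--Tsimerman argument: they do organize the count of principally polarized abelian varieties via Honda--Tate plus a mass formula, the dominant mass really does come from isogeny classes with a large elliptic power because the relevant local volume factors grow like $p^{c h^2}$, and the conditional input (their Conjecture~5.2) is exactly what is needed to control the complementary mass via class-number/regulator bounds. Two points where your outline is vague relative to the actual proof: first, the density-$\geq 1-2^{-9}$ condition on $p$ arises from a specific sieve/averaging argument in \cite{LT} rather than a generic ``union bound over local obstructions''; second, the lower bound does not merely fix one ordinary $E$ but exploits the full family of elliptic Weil polynomials. These are refinements rather than gaps, but your proposal as written is a plan, not a proof, and the genuine work lies entirely in the cited reference.
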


The purpose of this article is to answer the question for abelian varieties (without polarization) over a finite field $\F_q$. An interesting point is that if we do not consider polarizations, most of the isogeny classes have a large simple isogeny factor, which is opposite to the case with principal polarizations. In particular, the number of isogeny classes of simple abelian varieties over $\F_q$ is large. We review some background material and summarize the results of this paper in the rest of this section. 

Let $A$ be an abelian variety over a finite field $\F_q$ of dimension $g$.
For a prime $\ell \nmid q$, there is a bijection
$$
\Q_{\ell} \otimes_{\Z} \End_{\F_q}(A) \cong \End_{\Q_{\ell}}(V_{\ell}(A))
$$
due to Tate \cite{TATE} so the $q$-Frobenius endomorphism on $A$ corresponds to an endomorphism on a $\Q_{\ell}$-vector space $V_{\ell}(A)$. Denote its characteristic polynomial by $p_A$. (It is called the Weil $q$-polynomial.)

Then $p_A$ is independent of the choice of $\ell$, monic, has integer coefficients, of degree $2g$ and all of its roots are Weil $q$-numbers (i.e. algebraic integers all of whose $\Gal(\overline{\Q}/\Q)$-conjugates have an absolute value $\sqrt{q}$) by the Riemann hypothesis (Weil conjecture). 
By Honda-Tate theorem \cite{HONDA, TATE}, two abelian varieties $A$ and $B$ over $\F_q$ are isogenous if and only if $p_A=p_B$. This enables us to count the number of isogeny classes of abelian varieties over $\F_q$ of given dimension by observing the Weil $q$-polynomials.

Let $m_q(g)$ be the number of isogeny classes of abelian varieties over $\F_q$ of dimension $g$. 
Then by \cite[Corollary 2.3]{LT} and \cite[Lemma 3.3.1]{DH}, the asymptotic formula for $m_q(g)$ as $g \rightarrow \infty$ is given by
\begin{equation} \label{eqn1}
m_q(g) = q^{\frac{1}{4}g^2(1+o(1))}.
\end{equation}
In Section \ref{Sec2}, we prove that the number of isogeny classes of simple abelian varieties over $\F_q$ of dimension $g$ have the same magnitude. 

\begin{theorem} \label{thm12}
(Theorem \ref{thm23}) Let $s_q(g)$ be the number of isogeny classes of simple abelian varieties over $\F_q$ of dimension $g$. Then
\begin{equation} \label{eqn2}
s_q(g) = q^{\frac{1}{4}g^2(1+o(1))}.
\end{equation}
\end{theorem}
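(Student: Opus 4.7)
The upper bound $s_q(g) \le m_q(g) = q^{\frac{1}{4}g^2(1+o(1))}$ is immediate from \eqref{eqn1}, so the plan is to produce $q^{\frac{1}{4}g^2(1-o(1))}$ distinct simple isogeny classes of dimension $g$ for the matching lower bound. The natural route is via Honda--Tate: for an \emph{ordinary} Weil $q$-number $\pi$ with minimal polynomial of degree $2g$, the Schur index $e_\pi$ of $\End^0(A_\pi)$ equals $1$ (the local Brauer invariants above $p$ are all $0$ or $1$), so $A_\pi$ is simple of dimension exactly $g$ and its Weil polynomial coincides with that minimal polynomial. I therefore reduce the problem to lower-bounding the count of irreducible \emph{ordinary} Weil $q$-polynomials of degree $2g$, that is, monic integer polynomials whose $2g$ roots all lie on $|z|=\sqrt{q}$ and whose coefficient of $x^g$ is coprime to $p$.

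The strategy then has two pieces: show that among the $m_q(g)$ Weil polynomials of degree $2g$, the ordinary ones form a positive proportion while the reducible ones form a \emph{vanishing} proportion. For the first, I would invoke the DiPippo--Howe parametrization \cite{DH} of Weil polynomials as integer lattice points in a bounded semi-algebraic region in $\R^g$; the ordinary condition $p \nmid a_g$ is a congruence on a single coordinate and preserves a fraction $(1-1/p)(1+o(1))$ of the points. For the second, every reducible Weil polynomial of degree $2g$ splits as a product of two Weil polynomials of positive degree, so
\[
\#\{\text{reducible Weil polynomials of degree }2g\} \;\le\; \sum_{a=1}^{g-1} m_q(a)\, m_q(g-a).
\]
Using the sharper DiPippo--Howe estimate $m_q(n) = \Theta(q^{n(n+1)/4})$, the largest term in this sum occurs at $a=1$ and is of order $q^{(g^2-g+2)/4}$, which is smaller than $m_q(g) = \Theta(q^{g(g+1)/4})$ by a factor $q^{-(g-1)/2}$; summing over $a$ still leaves $o(m_q(g))$.

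Combining the two estimates, the number of irreducible ordinary Weil $q$-polynomials of degree $2g$ is at least $(1-1/p+o(1))\,m_q(g) - o(m_q(g))$, which has logarithmic size $\frac{1}{4}g^2(1+o(1))$. Each such polynomial corresponds by Honda--Tate to a distinct isogeny class of simple ordinary abelian varieties of dimension $g$, which yields the theorem.

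The main obstacle I anticipate is the reducibility estimate: the bare log asymptotic \eqref{eqn1} is \emph{not} strong enough to conclude $m_q(1)\,m_q(g-1) = o(m_q(g))$, because the $o(g^2)$ errors in the exponent can easily absorb the linear-in-$g$ gap between $(g-1)^2$ and $g^2$. To make the reducibles genuinely negligible one has to reach back into the polytope-counting argument of DiPippo--Howe and extract an effective bound of the shape $m_q(n) \le C_q\, q^{n(n+1)/4}$, with a linear-in-$n$ correction rather than merely $o(n^2)$. This, rather than the Honda--Tate input or the ordinarity bookkeeping, is the technical heart of the proof.
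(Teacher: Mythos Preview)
Your approach is correct and in fact yields a bit more than the theorem requires, but it differs from the paper's in how the reducibles are eliminated. Both arguments sit inside the DiPippo--Howe family of \emph{ordinary} Weil $q$-polynomials: the condition $(a_g,q)=1$ built into the paper's sets $X_g$ and $Y_g$ is exactly your ordinarity hypothesis, which forces $e_\pi=1$ and hence ``irreducible $\Rightarrow$ simple of dimension $g$''. The paper, however, does \emph{not} invoke the sharp bound $m_q(n)=\Theta(q^{n(n+1)/4})$. Instead it passes to a sub-box $Y_g\subset X_g$ (each coordinate cut down by a factor $1/g$) and proves an elementary completion lemma (Lemma~\ref{lem22}): if $F(c_1,\dots,c_g)=F(a_1,\dots,a_n)\cdot F(b_1,\dots,b_{g-n})$, then the tail $(c_{g-n+1},\dots,c_g)$ is determined by the small factor $(a_1,\dots,a_n)$ together with the head $(c_1,\dots,c_{g-n})$. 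Conditioning on the head, the reducibles in each fiber are bounded by the number of degree-$2n$ Weil polynomials, at most $\prod_{k=1}^{n}(4g\sqrt{q}^{\,k}+1)$, while the fiber itself has size roughly $\prod_{i=g-n+1}^{g} q^{i/2}/g$; the ratio is at most $(5g^2 q^{-(g-1)/4})^n$, and summing over $n$ gives $|Y_g^{\text{sim}}|\ge\tfrac12|Y_g|$.

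Your global count $\sum_a m_q(a)\,m_q(g-a)=o(m_q(g))$ is conceptually cleaner and proves the stronger statement that the reducible fraction among \emph{all} ordinary Weil polynomials is $O(q^{-(g-1)/2})$, which already pushes toward Conjecture~\ref{conj14}. The cost is precisely the obstacle you flag at the end: you must import from \cite{DH} the effective two-sided bound $c_q\,q^{n(n+1)/4}\le m_q(n)\le C_q\,q^{n(n+1)/4}$ (this is indeed available there, so your worry is resolvable), whereas the paper's fiberwise trick needs only the crude coefficient bound from \cite{LT} and the box structure of $Y_g$.
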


Section \ref{Sec3} is devoted to the distribution of simple isogeny factors of the isogeny classes of abelian varieties over $\F_q$. 
Contrary to the case with principal polarizations (given in Proposition \ref{prop11}), most of the isogeny classes have a large simple isogeny factor. 
Precisely, we prove that for any $\varepsilon>0$, 
\begin{equation} \label{eqn3}
\lim_{g \rightarrow \infty} \frac{a_{\text{L}}(q, g, \varepsilon)}{m_q(g)}=1
\end{equation}
where $a_{\text{L}}(q, g, \varepsilon)$ is the number of isogeny classes of $g$-dimensional abelian varieties over $\F_q$ whose largest simple isogeny factor has dimension at least $(1 - \varepsilon) g$ (see Theorem \ref{thm32}).

The main result of this paper is given in Section \ref{Sec4}. Its proof is based on (\ref{eqn2}), (\ref{eqn3}) and some elementary arguments. 

\begin{theorem} \label{thm13}
(Theorem \ref{thm41}) 
\begin{equation}
\limsup_{g \rightarrow \infty}\frac{s_q(g)}{m_q(g)}=1.
\end{equation}
\end{theorem}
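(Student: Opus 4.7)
The bound $\limsup_g s_q(g)/m_q(g) \leq 1$ is immediate from $s_q(g) \leq m_q(g)$, so the content of the theorem is the reverse inequality: for every $\eta > 0$, there should be arbitrarily large $g$ with $s_q(g) \geq (1-\eta) m_q(g)$.

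My first step is to obtain an exact formula for $a_{\text{L}}(q,g,\varepsilon)$ when $\varepsilon \in (0, 1/2)$. If a $g$-dimensional abelian variety $A$ has a simple factor $S$ of dimension $h > g/2$, then $S$ is unique (two such would overflow $g$) and $A \sim S \times B$ with $\dim B = g-h < h$; since $\dim B < \dim S$, the pair $(S,B)$ is recovered from $A$, giving a bijection between such isogeny classes of $A$ and pairs (simple of dimension $h$, arbitrary of dimension $g-h$). Summing over $h \in [\lceil(1-\varepsilon)g\rceil, g]$ yields
\[
a_{\text{L}}(q,g,\varepsilon) = s_q(g) + \sum_{k=1}^{\lfloor \varepsilon g\rfloor} s_q(g-k)\, m_q(k).
\]
Dividing by $m_q(g)$ and using (\ref{eqn3}),
\[
\frac{s_q(g)}{m_q(g)} = 1 - o(1) - \sum_{k=1}^{\lfloor \varepsilon g\rfloor} \frac{s_q(g-k)\, m_q(k)}{m_q(g)},
\]
so the theorem reduces to finding arbitrarily large $g$ for which this sum is $\leq \eta$, for some fixed $\varepsilon \in (0, 1/2)$.

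I would prove the reduced statement by contradiction. If the sum were bounded below by some $c > 0$ for all sufficiently large $g$, then pigeonhole (applied to $\lfloor \varepsilon g \rfloor$ nonnegative summands) would yield $k^*(g) \in [1, \lfloor \varepsilon g\rfloor]$ with $s_q(g-k^*) m_q(k^*) \geq c\,m_q(g)/(\varepsilon g)$; using $s_q(g-k^*) \leq m_q(g-k^*)$, this gives
\[
m_q(g) \leq \frac{\varepsilon g}{c} \cdot m_q(g-k^*)\, m_q(k^*).
\]
Iterating this inequality, starting from $g$ and proceeding until the dimension drops below a fixed threshold $G$, produces a partition $g = k_1 + \cdots + k_r + \tilde{g}$ with $\tilde{g} \leq G$, each $k_j \leq \varepsilon g$, and $r \leq g$, together with
\[
m_q(g) \leq \left(\frac{\varepsilon g}{c}\right)^r \prod_{j=1}^r m_q(k_j) \cdot m_q(\tilde{g}).
\]

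The contradiction comes from comparing orders of magnitude. Since $\sum_j k_j^2 \leq (\max_j k_j)(\sum_j k_j) \leq \varepsilon g^2$, applying (\ref{eqn1}) termwise gives $\log_q \prod_j m_q(k_j) \leq \varepsilon g^2/4 + o(g^2)$; the prefactor contributes $r \log_q(\varepsilon g/c) \leq g \log_q(\varepsilon g/c) = O(g \log g) = o(g^2)$; and $m_q(\tilde g) = O(1)$. Thus $\log_q m_q(g) \leq \varepsilon g^2/4 + o(g^2)$, contradicting (\ref{eqn1}) once $\varepsilon < 1$, and the theorem follows. The principal obstacle in this approach is verifying that the iteration behaves as required: one must check that the $k_j$ satisfy $k_j \leq \varepsilon g$ (automatic from the construction) and that the number of steps is controlled by $r \leq g$, so that the polynomial factor $(\varepsilon g/c)^r$ is absorbed into $o(g^2)$ in the exponent and does not spoil the comparison with the leading $q^{g^2/4}$ growth of $m_q(g)$.
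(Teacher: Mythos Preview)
Your argument is correct and follows a genuinely different route from the paper. Both begin from the identity $a_{\text{L}}(q,g,\varepsilon) = s_q(g) + \sum_{1\le k \le \varepsilon g} s_q(g-k)\, m_q(k)$ together with Theorem~\ref{thm32}. From there the paper assumes $s_q(g) \leq c\, m_q(g)$ for all large $g$, deduces a recursion $s_q(g) \leq C_2 \sum_{i \leq \varepsilon g} q^{C_1 i^2} s_q(g-i)$, and iterates it with explicitly tracked coefficients (the inequality~(\ref{eqn6})) to bound $s_q(g+n)$ by $s_q(g-1),\ldots,s_q(g-n)$; the contradiction is then with the asymptotic $s_q(g) = q^{g^2/4\,(1+o(1))}$ of Theorem~\ref{thm23}. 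You instead pass immediately from $s_q$ to $m_q$ via $s_q \leq m_q$, use pigeonhole to isolate a single dominant index, and iterate to produce a partition $g = \sum_j k_j + \tilde g$ with each $k_j \leq \varepsilon g$; since $\sum_j k_j^2 \leq \varepsilon g^2$, this contradicts the asymptotic~(\ref{eqn1}) for $m_q$. The payoff of your approach is that it is independent of Theorem~\ref{thm23}: it uses only~(\ref{eqn1}) and Theorem~\ref{thm32}, and avoids the coefficient bookkeeping of~(\ref{eqn6}). The paper's approach, in exchange, keeps $s_q$ in the recursion throughout and makes the size comparison at each stage explicit. One detail to tighten in your write-up: when you apply~(\ref{eqn1}) ``termwise'' to $\prod_j m_q(k_j)$, some $k_j$ may stay bounded, so the asymptotic does not literally apply to those factors; but there are at most $r \leq g$ such terms, each contributing $O(1)$, so their total is $q^{O(g)} = q^{o(g^2)}$ and your estimate $\log_q \prod_j m_q(k_j) \leq \varepsilon g^2/4 + o(g^2)$ survives.
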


The above theorem suggests that there are much more simple isogeny classes of abelian varieties over $\F_q$ of dimension $g$ than non-simple ones for sufficiently large $g$. We expect that the following conjecture to be true. 

\begin{conjecture} \label{conj14}
\begin{equation} 
\lim_{g \rightarrow \infty}\frac{s_q(g)}{m_q(g)}=1. 
\end{equation}
\end{conjecture}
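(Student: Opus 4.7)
The plan is to strengthen Theorem \ref{thm13} by proving the quantitative statement $m_q(g) - s_q(g) = o(m_q(g))$, which would upgrade the limit superior to an actual limit. The starting point is a decomposition bound coming from Honda--Tate: every non-simple isogeny class $A$ of dimension $g$ has a simple isogeny factor $B$ of smallest dimension $h \leq \lfloor g/2 \rfloor$, so writing $A \sim B \times C$ with $\dim C = g - h$ gives
\[
m_q(g) - s_q(g) \;\leq\; \sum_{h=1}^{\lfloor g/2 \rfloor} s_q(h) \cdot m_q(g-h),
\]
since each non-simple class is counted at least once on the right.

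Heuristically, inserting $s_q(h), m_q(h) = q^{h^2/4(1+o(1))}$ into the right-hand side yields
\[
q^{g^2/4(1+o(1))} \sum_{h=1}^{\lfloor g/2 \rfloor} q^{-h(g-h)/2}.
\]
The inner sum is dominated by its endpoints and is of order $q^{-(g-1)/2}$, a genuine saving over $m_q(g)$. The conjecture would then follow, \emph{provided} the $o(g^2)$ in the exponent can be controlled at the level of $o(g)$ (so that it does not swallow the saving $q^{-(g-1)/2}$).

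The main obstacle is precisely this insufficiency of (\ref{eqn1}) and (\ref{eqn2}): they control $\log_q m_q(g)$ and $\log_q s_q(g)$ only up to $o(g^2)$, which overwhelms the savings produced by the decomposition. Closing the gap would require effective asymptotics of the form $m_q(g), s_q(g) = q^{g^2/4 + O(g \log g)}$ (or better). A sharpening of the lattice-point count inside the DiPippo--Howe region \cite{DH} should yield such a form for $m_q(g)$, with the error term bounded by the boundary contribution. The harder step is the matching lower bound for $s_q(g)$: one must show that among the $q^{g^2/4 + O(g)}$ Weil $q$-polynomials of degree $2g$, only a proportion $q^{-\Omega(g)}$ correspond to non-simple isogeny classes. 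This amounts to a quantitative irreducibility statement for integer polynomials whose roots lie on the circle $|z| = \sqrt{q}$, in the spirit of van der Waerden's theorem on generic irreducibility but adapted to the curved region cut out by the Riemann hypothesis; establishing such a statement is where I expect the principal difficulty to lie.
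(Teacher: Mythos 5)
First, note that the statement you are addressing is Conjecture \ref{conj14}, which the paper explicitly leaves \emph{open}: the paper proves only the weaker $\limsup$ statement (Theorem \ref{thm41}), via the identity $a_{\text{L}}(q,g,\varepsilon)=s_q(g)+\sum_{1\le i\le \varepsilon g}m_q(i)s_q(g-i)$ and an iteration argument. So there is no proof in the paper to compare against, and your text is, by your own account, a plan rather than a proof. Your opening inequality
$$
m_q(g)-s_q(g)\;\le\;\sum_{h=1}^{\lfloor g/2\rfloor}s_q(h)\,m_q(g-h)
$$
is correct (map each non-simple class to a decomposition whose first factor is a simple factor of minimal dimension, necessarily of dimension $\le g/2$), and it is essentially the same decomposition used in Lemma \ref{lem31} and Section \ref{Sec4}. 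But the proposal stops exactly where the difficulty begins, so it does not establish the conjecture.

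Two more specific comments on the plan. (i) The ingredient you flag as the ``principal difficulty'' --- a matching lower bound for $s_q(g)$ via a quantitative irreducibility statement showing that only a proportion $q^{-\Omega(g)}$ of Weil $q$-polynomials are non-simple --- is, as stated, circular: that assertion is a strengthened form of the very conjecture you are trying to prove. It is also unnecessary for your own reduction: since $s_q(h)\le m_q(h)$, your inequality reduces the conjecture entirely to the statement $\sum_{h\le g/2}m_q(h)m_q(g-h)=o(m_q(g))$, which involves $m_q$ alone. (ii) The precision you say would suffice, $m_q(g)=q^{g^2/4+O(g\log g)}$, is not enough: an uncontrolled multiplicative ambiguity of size $q^{cg\log g}$ still swamps the saving $q^{-(g-1)/2}$ coming from the dominant term $h=1$. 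Concretely, with the bounds actually available in the paper --- $m_q(g)\ge\left|X_g\right|\asymp \frac{2^{g}}{g!}q^{g(g+1)/4}$ from Lemma \ref{lem21}, and $m_q(g)\le\prod_{k=1}^{g}\left(4g\sqrt{q}^{\,k}+1\right)$ from \cite[Corollary 2.3]{LT} --- one finds $m_q(1)m_q(g-1)/m_q(g)\lesssim g^{2g(1+o(1))}q^{-g/2}\rightarrow\infty$, so even the single term $h=1$ cannot be controlled. What is genuinely needed is an asymptotic for $m_q(g)$, uniform as $g\rightarrow\infty$ for fixed $q$, accurate to within a multiplicative factor $q^{o(g)}$ (e.g.\ a uniform version of the DiPippo--Howe asymptotic $m_q(g)\sim r(q)v_gq^{g(g+1)/4}$). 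Neither the paper nor your proposal supplies this, so the gap remains and the statement stays a conjecture.
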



\section{Number of isogeny classes of simple abelian varieties} \label{Sec2}

Let $s_q(g)$ be the number of isogeny classes of $g$-dimensional simple abelian varieties over $\F_q$. Since 
$$
s_q(g) \leq m_q(g) = q^{\frac{1}{4}g^2(1+o(1))}, 
$$
we only need to consider the lower bound of $s_q(g)$. We make critical use of the following Lemma of DiPippo and Howe \cite{DH}, which gives a lower bound for $m_q(g)$ coarsely of the right order of magnitude.

\begin{lemma} \label{lem21}
\textup{(\cite{DH}, Lemma 3.3.1)} Suppose that $a_1, \cdots, a_g$ are integers such that
$$
\left | \frac{a_g}{2q^{g/2}} \right | + \sum_{i=1}^{g-1} \left | \frac{a_i}{q^{i/2}} \right | \leq 1
$$
and $(a_g, q)=1$. Then
$$
F(a_1, \cdots, a_g) := (x^{2g}+q^g)+a_1(x^{2g-1}+q^{g-1}x) + \cdots + a_{g-1}(x^{g+1}+qx^{g-1})+a_gx^g
$$
is a Weil $q$-polynomial. 
\end{lemma}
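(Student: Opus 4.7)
The plan is to reduce to a one-variable real-root problem via the substitution $y = x + q/x$, then use the hypothesis as a sign-alternation estimate at Chebyshev-type nodes. First, $F$ is visibly monic of degree $2g$ with integer coefficients, and by inspection the coefficient of $x^{g-i}$ equals $q^i$ times the coefficient of $x^{g+i}$ for every $0 \leq i \leq g$. This palindromic symmetry is equivalent to the functional equation $x^{2g} F(q/x) = q^g F(x)$, so the roots of $F$ come in pairs $\{\alpha, q/\alpha\}$. It therefore suffices to show that every root $\alpha$ satisfies $|\alpha| = \sqrt{q}$: the conjugate root is then $\overline{\alpha} = q/\alpha$, and since the set of roots is closed under $\Gal(\overline{\Q}/\Q)$, the Weil $q$-number condition follows for each root individually.

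Second, I would encode the symmetry by writing $F(x) = x^g h(y)$ with $y = x + q/x$, where $h \in \Z[y]$ is monic of degree $g$. Defining $P_n$ by $P_0 = 2$, $P_1 = y$, and $P_{n+1} = y P_n - q P_{n-1}$, one checks that $x^n + q^n x^{-n} = P_n(y)$, hence
\[
h(y) = P_g(y) + a_1 P_{g-1}(y) + \cdots + a_{g-1} P_1(y) + a_g.
\]
Parametrising $x = \sqrt{q}\, e^{i\theta}$ gives $y = 2\sqrt{q}\cos\theta$ and $P_n(y) = 2 q^{n/2}\cos(n\theta)$, and conversely each real $y_0 \in [-2\sqrt{q}, 2\sqrt{q}]$ lifts via $x^2 - y_0 x + q = 0$ to a complex conjugate pair of roots of absolute value $\sqrt{q}$. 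Thus it is enough to show that $h$ has $g$ real roots in $[-2\sqrt{q}, 2\sqrt{q}]$.

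Third, I would evaluate $h$ at the $g+1$ nodes $y_k = 2\sqrt{q}\cos(k\pi/g)$, $k = 0, 1, \ldots, g$. Since $\cos(g \cdot k\pi/g) = (-1)^k$, one obtains
\[
\frac{h(y_k)}{q^{g/2}} = 2(-1)^k + \sum_{i=1}^{g-1} 2 a_i q^{-i/2}\cos((g-i)k\pi/g) + a_g q^{-g/2},
\]
whose perturbation term is bounded in absolute value by $2\sum_{i=1}^{g-1}|a_i| q^{-i/2} + |a_g| q^{-g/2} \leq 2$ by the hypothesis. Hence $h(y_k)$ has sign $(-1)^k$, at least weakly. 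The intermediate value theorem then produces a real root of $h$ in each of the $g$ intervals $[y_{k+1}, y_k]$; since $\deg h = g$, these account for all roots.

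The main technical subtlety is the boundary case of the hypothesis, where some $h(y_k)$ may vanish and the sign alternation becomes weak rather than strict, so one must take care not to lose roots by having them coincide at nodes. I would handle this by replacing $(a_1, \ldots, a_g)$ with $(1-\varepsilon)(a_1, \ldots, a_g)$, which satisfies the strict inequality and therefore yields $g$ distinct real roots in the open interval, and then letting $\varepsilon \to 0^+$ and invoking continuity of roots of monic polynomials to conclude that the limiting $h$ still has $g$ real roots in $[-2\sqrt{q}, 2\sqrt{q}]$. I do not expect to use the coprimality condition $(a_g, q) = 1$ for the Weil property itself; it is presumably retained because Lemma~\ref{lem21} is later applied through the Honda--Tate correspondence to count isogeny classes, where this condition controls the $p$-adic behaviour of the roots and the realisability of $F$ as an actual characteristic polynomial of Frobenius.
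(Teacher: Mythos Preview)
The paper does not prove this lemma; it is quoted verbatim from DiPippo--Howe and used as a black box. Your argument is essentially the DiPippo--Howe proof: pass to the trace polynomial $h(y)$ via $y=x+q/x$, evaluate at the Chebyshev nodes $y_k=2\sqrt{q}\cos(k\pi/g)$, and use the hypothesis to force sign alternation and hence $g$ real roots in $[-2\sqrt{q},2\sqrt{q}]$. The analytic steps are correct, including the limiting argument for the boundary case.

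One point deserves tightening. In this paper ``Weil $q$-polynomial'' means the characteristic polynomial of Frobenius of an \emph{actual} abelian variety over $\F_q$, not merely a polynomial whose roots have absolute value $\sqrt{q}$. So the coprimality condition $(a_g,q)=1$ is not an extra hypothesis reserved for later applications; it is part of what makes the lemma true as stated. Your root--location argument shows only that every root lies on $|x|=\sqrt{q}$. To finish, one must invoke Honda--Tate: the middle coefficient of $F$ is $a_g$, and $(a_g,q)=1$ forces exactly half of the Newton slopes to be $0$ and half to be $1$, i.e.\ the polynomial is ordinary, and for ordinary $q$-symmetric polynomials with all roots on $|x|=\sqrt{q}$ the exponent $e$ in Honda--Tate is always $1$, so $F$ really is the Weil polynomial of some (ordinary) abelian variety. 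You identified the mechanism correctly in your last paragraph but placed it outside the lemma rather than inside its proof.
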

Let
$$
X_g := \left \{ (a_1, \cdots, a_g) \in \Z^g : \left | \frac{a_g}{2q^{g/2}} \right | + \sum_{i=1}^{g-1} \left | \frac{a_i}{q^{i/2}} \right | \leq 1 \text{ and } (a_g, q)=1 \right \}
$$
(then $\left | X_g \right | = q^{\frac{1}{4}g^2(1+o(1))}$) and 
\begin{equation*} 
\begin{split}
X_g^{\text{sim}} & := \left \{ (a_1, \cdots, a_g) \in X_g : F(a_1, \cdots, a_g) \text{ corresponds to a simple variety} \right \}, \\
X_g^{\text{irr}} & := \left \{ (a_1, \cdots, a_g) \in X_g : F(a_1, \cdots, a_g) \text{ is an irreducible polynomial} \right \} \subset X_g^{\text{sim}}.
\end{split} 
\end{equation*}

We want to prove that 
$$
\left | X_g^{\text{sim}} \right | = q^{\frac{1}{4}g^2(1+o(1))}.
$$ 
One may try to prove this by proving $\left | X_g^{\text{irr}} \right | = q^{\frac{1}{4}g^2(1+o(1))}$, but it is not easy to determine whether given $F(a_1, \cdots, a_g)$ is irreducible or not. For example, the constant term of $F(a_1, \cdots, a_g)$ is $q^g$ so Eisenstein's criterion cannot be applied. Rather than finding a subset of $X_g^{\text{irr}}$ whose size is $q^{\frac{1}{4}g^2(1+o(1))}$, we use different method. 

\begin{lemma} \label{lem22}
Suppose that $m \geq n \geq 1$ are integers and
$$
F(c_1, \cdots, c_{m+n}) = F(a_1, \cdots, a_n) \cdot F(b_1, \cdots, b_m). 
$$
Then $c_{m+1}, \cdots, c_{m+n}$ are determined by $a_1, \cdots, a_n, c_1, \cdots, c_m$. 
\end{lemma}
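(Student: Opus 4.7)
The plan is to recover the entire second factor $B := F(b_1, \ldots, b_m)$ from the data $(a_1, \ldots, a_n, c_1, \ldots, c_m)$. Once $B$ is in hand, the product $A \cdot B$ with $A := F(a_1, \ldots, a_n)$ is completely determined, and in particular so are the remaining coefficients $c_{m+1}, \ldots, c_{m+n}$.

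The crucial structural input is a \emph{$q$-symmetry} shared by every polynomial of the form $F(b_1, \ldots, b_m)$: writing $B(x) = \sum_{j=0}^{2m} \beta_j x^j$, one has $\beta_j = q^{m-j}\beta_{2m-j}$ for every $j$, which is immediate from the defining formula of $F$. Consequently, knowing the top $m+1$ coefficients $\beta_{2m}, \beta_{2m-1}, \ldots, \beta_m$ already pins down $B$ in full. First I would recover these $m+1$ coefficients by performing polynomial long division of $C := F(c_1, \ldots, c_{m+n})$ by $A$. The top $m+1$ coefficients of $C$ are precisely the known data $1, c_1, \ldots, c_m$, and from the identity
\[
\gamma_{2(m+n)-k} \;=\; \sum_{i+j = 2(m+n)-k} \alpha_i \beta_j
\]
(where $\alpha_i$, $\gamma_\ell$ denote the coefficients of $A$ and $C$), the facts that $\alpha_i = 0$ for $i > 2n$ and $\alpha_{2n} = 1$ allow one to solve inductively for $\beta_{2m-k}$, for $k = 0, 1, \ldots, m$, in terms of the $\alpha_i$, the given top coefficients of $C$, and the previously computed $\beta$'s.

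Next I would invoke the $q$-symmetry of $B$ to fill in $\beta_{m-1}, \ldots, \beta_0$; at that point $B$ is fully reconstructed, and the product $A \cdot B$ gives $c_{m+1}, \ldots, c_{m+n}$ as explicit polynomial expressions in the given quantities. I do not anticipate any real obstacle: the long division always terminates cleanly because $A$ is monic, and the hypothesis $m \geq n$ is not actually needed in the argument beyond ensuring that the unknowns $c_{m+1}, \ldots, c_{m+n}$ are disjoint from the given coefficients $c_1, \ldots, c_m$. The only point requiring a brief verification is the $q$-symmetry of $B$ itself, which is a direct check against the definition of $F$.
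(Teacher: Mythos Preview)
Your proposal is correct and follows essentially the same approach as the paper: both arguments compare the coefficients of $x^{2(m+n)-k}$ for $k=1,\ldots,m$ to solve inductively for $b_1,\ldots,b_m$ (your $\beta_{2m-1},\ldots,\beta_m$) in terms of $a_1,\ldots,a_n,c_1,\ldots,c_m$, after which the full factor $B$ --- and hence the remaining $c_i$ --- is determined. Your invocation of the $q$-symmetry is simply a restatement of the fact that $b_1,\ldots,b_m$ parametrize $F(b_1,\ldots,b_m)$, which the paper uses implicitly.
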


\begin{proof}
For $1 \leq i \leq m$, $c_i = b_i + g_i(a_1, \cdots, a_n, b_1, \cdots, b_{i-1})$ for some polynomial function $g_i$, which follows by comparing the coefficients of $x^{2(m+n)-i}$ on both sides.
By induction on $i$, one can show that $b_i$ can be represented as a function of $a_1, \cdots, a_n, c_1, \cdots, c_i$. Thus $a_1, \cdots, a_n, c_1, \cdots, c_m$ determine $b_1, \cdots, b_m$ and consequently determine $c_{m+1}, \cdots, c_{m+n}$. 
\end{proof}

\begin{theorem} \label{thm23}
$s_q(g) = q^{\frac{1}{4}g^2(1+o(1))}$. 
\end{theorem}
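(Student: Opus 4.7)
The upper bound $s_q(g) \leq m_q(g) = q^{g^2/4(1+o(1))}$ is immediate, so my plan is to establish the matching lower bound by showing that the non-simple portion of $X_g$ is asymptotically negligible; this forces $|X_g^{\text{sim}}| = (1+o(1))|X_g| = q^{g^2/4(1+o(1))}$, from which the theorem follows. The key observation is that if $(c_1, \ldots, c_g) \in X_g \setminus X_g^{\text{sim}}$, then $F(c_1,\ldots,c_g)$ is not a power of a single irreducible Weil polynomial, so after separating off one simple isogeny component it factors as
\begin{equation*}
F(c_1, \ldots, c_g) = F(a_1, \ldots, a_n) \cdot F(b_1, \ldots, b_m)
\end{equation*}
with $n+m = g$, $1 \leq n \leq m$, both factors being Weil $q$-polynomials. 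The $F$-form of each factor is forced by the functional equation $x^{2k}(\text{factor})(q/x) = q^k (\text{factor})(x)$. By Lemma \ref{lem22}, the whole tuple is then determined by $(a_1, \ldots, a_n; c_1, \ldots, c_m)$.

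Counting such pairs gives
\begin{equation*}
|X_g \setminus X_g^{\text{sim}}| \leq \sum_{n=1}^{\lfloor g/2 \rfloor} m_q(n) \cdot N_{g-n},
\end{equation*}
where $N_m$ is the number of integer $(c_1,\ldots,c_m)$ with $\sum_{i=1}^{m}|c_i|/q^{i/2} \leq 1$, a constraint automatically implied by membership in $X_g$. I would then insert the DiPippo--Howe estimates $m_q(n) \ll q^{n^2/4 + n/2}$ (up to polynomial factors), $N_m \ll \frac{2^m}{m!}\, q^{m(m+1)/4}$ (by volume of an $\ell^1$-simplex), and the matching lower bound $|X_g| \gg \frac{2^{g+1}}{g!}\, q^{g(g+1)/4}$. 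Substituting and using $n+m=g$, the $q$-exponent of the ratio of the $(n,m)$-summand to $|X_g|$ collapses to $-n(2m-1)/4$, while the factorials combine to at most $g^n/2^{n+1}$. Since $m \geq g/2$, the sum over $n$ is geometric in $g\, q^{-(g-1)/4}$ and tends to $0$ as $g \to \infty$.

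The main obstacle is working with the correct degree of precision: the logarithmic asymptotics $m_q(n) = q^{n^2/4(1+o(1))}$ and $|X_g| = q^{g^2/4(1+o(1))}$ taken in isolation are not strong enough to conclude, because the gain of $q^{-g/4}$ per factorization can be wiped out by an unfavorable $o(g^2)$ correction. The sharper DiPippo--Howe estimates (including the subleading $n/2$ in the exponent of $m_q(n)$ and the $1/g!$ from the simplex volume in $|X_g|$) are essential to make the ratio analysis close. Combining these with the purely combinatorial content of Lemma \ref{lem22} is the crux of the argument.
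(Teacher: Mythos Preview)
Your overall strategy is correct and shares with the paper the essential use of Lemma~\ref{lem22}: once a factorization $F(c_1,\dots,c_g)=F(a_1,\dots,a_n)F(b_1,\dots,b_{g-n})$ exists, the last $n$ coordinates of $\mathbf{c}$ are forced by $(a_1,\dots,a_n)$ and the first $g-n$ coordinates. Where you diverge from the paper is in the choice of ambient set. You work directly with the cross-polytope $X_g$ and therefore need the sharp DiPippo--Howe asymptotics (including the $1/g!$-type simplex volume and the subleading $q^{n/4}$ in $m_q(n)$) to make the ratio $\sum_n m_q(n)\,N_{g-n}/|X_g|$ tend to $0$; as you correctly note, the crude $q^{g^2/4(1+o(1))}$ form of these quantities is insufficient. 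The paper instead replaces $X_g$ by the smaller \emph{box}
\[
Y_g=\Bigl\{(a_1,\dots,a_g)\in\Z^g:\ \bigl|a_i/q^{i/2}\bigr|\le \tfrac1g\ (i<g),\ \bigl|a_g/(2q^{g/2})\bigr|\le\tfrac1g,\ (a_g,q)=1\Bigr\}.
\]
Because $Y_g$ has a product structure, the fibres $Y_g(\mathbf{c})$ over each fixed $(c_1,\dots,c_{g-n})$ all have the same cardinality, which can be bounded below by an elementary computation; and $|Y_{g,n}(\mathbf{c})|$ is bounded above by the crude Lipnowski--Tsimerman count $\prod_{k\le n}(4g\,q^{k/2}+1)$ for Weil $q$-polynomials of degree $2n$. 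This yields $|Y_{g,n}|/|Y_g|\le (5g^2 q^{-(g-1)/4})^n$ without any delicate lattice-point-versus-volume comparison.

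In short: both routes go through, but the paper's box trick buys simplicity --- it needs only the coarse upper bound on $m_q(n)$ and an elementary count, whereas your simplex approach needs the precise constants in DiPippo--Howe to survive the exponential-in-$g$ corrections in the lattice-point estimates (a point you yourself flag). Your approach, if carried out, would yield the somewhat stronger statement $|X_g\setminus X_g^{\mathrm{sim}}|=o(|X_g|)$, while the paper only proves $|Y_g^{\mathrm{sim}}|\ge |Y_g|/2$; for the theorem at hand either suffices.
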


\begin{proof}
Let
$$
Y_g := \left \{ (a_1, \cdots, a_g) \in \Z^g : \left | \frac{a_g}{2q^{g/2}} \right | \leq \frac{1}{g}, \, \left | \frac{a_i}{q^{i/2}} \right | \leq \frac{1}{g} \,\, (1 \leq i \leq g-1) \text{ and } (a_g, q)=1 \right \} \subset X_g
$$
and
$$
Y_g^{\text{sim}} := Y_g \cap X_g^{\text{sim}} = \left \{ (a_1, \cdots, a_g) \in Y_g : F(a_1, \cdots, a_g) \text{ corresponds to a simple variety} \right \}.
$$
Then clearly $\left | Y_g \right |=q^{\frac{1}{4}g^2(1+o(1))}$. 
Also for any integer $\displaystyle 1 \leq n \leq \frac{g}{2}$, define
\begin{equation*}
Y_{g, n} := \begin{Bmatrix}
(c_1, \cdots, c_g) \in Y_g : F(c_1, \cdots, c_g) = F(a_1, \cdots, a_n) \cdot F(b_1, \cdots, b_{g-n}) \\ 
\text{for some Weil }q\text{-polynomials } F(a_1, \cdots, a_n) \text{ and } F(b_1, \cdots, b_{g-n})
\end{Bmatrix}
\end{equation*}
Then 
$$
Y_g^{\text{sim}} = Y_g \setminus \cup_{1 \leq n \leq \frac{g}{2}} Y_{g, n}
$$
so
\begin{equation}
\left | Y_g^{\text{sim}} \right | \geq \left | Y_g \right |\left ( 1 - \sum_{1 \leq n \leq \frac{g}{2}} \frac{\left | Y_{g, n} \right |}{\left | Y_g \right |} \right ). 
\end{equation}

Now we provide an upper bound of $\displaystyle \frac{\left | Y_{g, n} \right |}{\left | Y_g \right |}$. For any fixed 
$$
\mathbf{c} = (c_1, \cdots, c_{g-n}) \in Z_{g}^{g-n} := \left \{ (a_1, \cdots, a_{g-n}) \in \Z^{g-n} : \left | \frac{a_i}{q^{i/2}} \right | \leq \frac{1}{g} \,\, (1 \leq i \leq g-n) \right \}, 
$$
denote
$$
Y_g(\mathbf{c}) := \left \{ (a_1, \cdots, a_g) \in Y_g : (a_1, \cdots, a_{g-n})=\mathbf{c} \right \}
$$
and
$$
Y_{g, n}(\mathbf{c}) := \left \{ (a_1, \cdots, a_g) \in Y_{g, n} : (a_1, \cdots, a_{g-n})=\mathbf{c} \right \}.
$$
Then 
\begin{center}
$\displaystyle Y_g = \bigsqcup_{\mathbf{c} \in Z_{g}^{g-n}} Y_g(\mathbf{c})$ and 
$\displaystyle Y_{g, n} = \bigsqcup_{\mathbf{c} \in Z_{g}^{g-n}} Y_{g, n}(\mathbf{c})$
\end{center}
so
$$
\frac{\left | Y_{g, n} \right |}{\left | Y_g \right |} \leq \max_{\mathbf{c} \in Z_{g}^{g-n}} \frac{\left | Y_{g, n}(\mathbf{c}) \right |}{\left | Y_g(\mathbf{c}) \right |}.
$$
$\left | Y_g(\mathbf{c}) \right |$ is determined by the choice of $a_{g-n+1}, \cdots, a_g$ and is independent of $\mathbf{c}$. It is given by
\begin{equation*}
\begin{split}
\left | Y_g(\mathbf{c}) \right | & = \prod_{i=g-n+1}^{g-1}\left ( 1+2\left \lfloor \frac{q^{\frac{i}{2}}}{g} \right \rfloor \right ) \cdot \left ( \left ( 1+2\left \lfloor \frac{2q^{\frac{g}{2}}}{g} \right \rfloor \right ) - \left ( 1+2\left \lfloor \frac{2q^{\frac{g}{2}-1}}{g} \right \rfloor \right ) \right ) \\
& > 2(q-1)\left \lfloor \frac{2q^{\frac{g}{2}-1}}{g} \right \rfloor \prod_{i=g-n+1}^{g-1}\frac{q^{\frac{i}{2}}}{g} \\
& > q \left ( \frac{2q^{\frac{g}{2}-1}}{g}-1 \right ) \left ( \frac{q^{\frac{2g-n}{4}}}{g} \right )^{n-1} \\
& = \left ( \frac{2q^{\frac{g}{2}}}{g}-q \right ) \left ( \frac{q^{\frac{2g-n}{4}}}{g} \right )^{n-1}.
\end{split}
\end{equation*}
For large enough $g$, $q^{\frac{g}{2}} > gq$ so 
$$
\left | Y_g(\mathbf{c}) \right | 
> \frac{q^{\frac{g}{2}}}{g} \cdot \left ( \frac{q^{\frac{2g-n}{4}}}{g} \right )^{n-1}
> \left ( \frac{q^{\frac{2g-n}{4}}}{g} \right )^n.
$$
By Lemma \ref{lem22}, an element of $Y_{g, n}(\mathbf{c})$ is determined by a Weil $q$-polynomial $F(a_1, \cdots, a_n)$. By \cite[Corollary 2.3]{LT}, the size of $Y_{g, n}(\mathbf{c})$ is bounded by
$$
\left | Y_{g, n}(\mathbf{c}) \right | 
\leq \prod_{k=1}^{n} (4g \sqrt{q}^k+1) 
< \prod_{k=1}^{n} 5g \sqrt{q}^k 
= (5g q^{\frac{n+1}{4}})^n.
$$
Now we have
$$
\frac{\left | Y_{g, n} \right |}{\left | Y_g \right |} 
\leq \max_{\mathbf{c} \in Z_{g}^{g-n}} \frac{\left | Y_{g, n}(\mathbf{c}) \right |}{\left | Y_g(\mathbf{c}) \right |}
\leq \left ( \frac{5g^2q^{\frac{n+1}{4}}}{q^{\frac{2g-n}{4}}} \right )^n 
\leq \left ( \frac{5g^2}{q^{\frac{g-1}{4}}} \right )^n. 
$$
If $g$ is sufficiently large, then
$$
\left ( \frac{5g^2}{q^{\frac{g-1}{4}}} \right ) \leq \frac{1}{3}
$$
so
$$
\left | Y_g^{\text{sim}} \right | 
\geq \left | Y_g \right | \left ( 1 - \sum_{1 \leq n \leq \frac{g}{2}} \frac{\left | Y_{g, n} \right |}{\left | Y_g \right |} \right ) 
\geq \left | Y_g \right | \left ( 1 - \sum_{1 \leq n \leq \frac{g}{2}} \left ( \frac{1}{3} \right )^n \right ) 
>\frac{\left | Y_g \right |}{2}.
$$
Thus 
$$\left | Y_g^{\text{sim}} \right | = q^{\frac{1}{4}g^2(1+o(1))}$$
and 
$$\left | Y_g^{\text{sim}} \right | \leq \left | X_g^{\text{sim}} \right | 
\leq s_q(g) \leq m_q(g) = q^{\frac{1}{4}g^2(1+o(1))}$$
so 
$$s_q(g) = q^{\frac{1}{4}g^2(1+o(1))}. $$
\end{proof}

By the inequality $\displaystyle \frac{\left | Y_{g, n} \right |}{\left | Y_g \right |} \leq \left ( \frac{5g^2}{q^{\frac{g-1}{4}}} \right )^n$, we have 
$$
\lim_{g \rightarrow \infty}\frac{\left | Y_g^{\text{sim}} \right |}{\left | Y_g \right |} = 1. 
$$
Note that this does not imply
$$
\lim_{g \rightarrow \infty}\frac{s_q(g)}{m_q(g)} = 1, 
$$
which means that the proportion of isogeny classes of $g$-dimensional abelian varieties over $\F_q$ which is simple approaches $1$ as $g \rightarrow \infty$. We provide a weaker version of this statement in Section \ref{Sec4}.


\section{Largest simple isogeny factor} \label{Sec3}

For any $0< \varepsilon < \frac{1}{3}$, let 
\begin{center}
$A_{\text{L}}(q, g, \varepsilon)$ (and $A_{\text{S}}(q, g, \varepsilon)$)
\end{center}
be the set of isogeny classes of $g$-dimensional abelian varieties over $\F_q$ whose largest simple isogeny factor has dimension $\geq (1- \varepsilon)g$ (and $<(1- \varepsilon)g$).

\begin{lemma} \label{lem31}
Every element of $A_{\text{S}}(q, g, \varepsilon)$ can be represented by $B_1 \times B_2$ for some abelian varieties $B_1$ and $B_2$ over $\F_q$ such that 
$\varepsilon g \leq \text{dim}B_1, \text{dim}B_2 \leq (1 - \varepsilon)g$. 
\end{lemma}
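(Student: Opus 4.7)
The plan is to pass from an isogeny class in $A_{\text{S}}(q,g,\varepsilon)$ to a full decomposition $A \sim T_1 \times \cdots \times T_N$ into simple abelian varieties (allowing repetitions corresponding to multiplicities), and then to partition the multiset of simple factors into two groups whose dimension sums each lie in $[\varepsilon g, (1-\varepsilon)g]$. By hypothesis every $T_j$ satisfies $\dim T_j < (1-\varepsilon) g$, and of course $\sum_{j=1}^N \dim T_j = g$. The problem is purely combinatorial: partition positive reals $d_1,\dots,d_N$, each less than $(1-\varepsilon)g$, summing to $g$, into two groups with each group sum in $[\varepsilon g,(1-\varepsilon)g]$.

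I would split into two cases. In the first case, some $d_j \geq \varepsilon g$; then $d_j \in [\varepsilon g,(1-\varepsilon)g)$, so take $B_1 = T_j$ and let $B_2$ be the product of the remaining simple factors. Then $\dim B_2 = g - d_j \in (\varepsilon g,(1-\varepsilon)g]$ and both dimensions lie in the desired window. In the second case, every $d_j < \varepsilon g$. I would apply a greedy procedure: let $S_k := d_1 + \cdots + d_k$ and take $k$ to be the smallest index with $S_k \geq \varepsilon g$ (such a $k$ exists because $S_N = g > \varepsilon g$). By minimality, $S_{k-1} < \varepsilon g$, so
\[
\varepsilon g \leq S_k = S_{k-1} + d_k < \varepsilon g + \varepsilon g = 2\varepsilon g.
\]
Setting $B_1 := T_1 \times \cdots \times T_k$ and $B_2 := T_{k+1} \times \cdots \times T_N$, we have $\dim B_1 = S_k \in [\varepsilon g,2\varepsilon g)$ and $\dim B_2 = g - S_k \in (g - 2\varepsilon g, (1-\varepsilon) g]$.

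The one place where an inequality could fail is the upper end of the greedy step: we need $2\varepsilon g \leq (1-\varepsilon)g$ and $(1-2\varepsilon)g \geq \varepsilon g$, i.e.\ $3\varepsilon \leq 1$. This is exactly where the standing assumption $\varepsilon < \tfrac{1}{3}$ (built into the definition of $A_{\text{S}}(q,g,\varepsilon)$ in the paper) is used; under this hypothesis both inequalities are strict and both $\dim B_1$ and $\dim B_2$ are confirmed to lie in $[\varepsilon g,(1-\varepsilon)g]$. I expect this verification of the slack provided by $\varepsilon < 1/3$ to be the only non-trivial point; the passage from the isogeny class to a decomposition into simple factors is just the Poincaré reducibility theorem combined with Honda--Tate, so no further ingredients are required.
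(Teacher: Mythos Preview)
Your proof is correct and follows essentially the same two-case strategy as the paper: split according to whether some simple factor has dimension at least $\varepsilon g$, and in the remaining case use that partial sums of the $d_i$ cannot jump over the interval $[\varepsilon g,(1-\varepsilon)g]$ because each step is smaller than its length. The paper phrases the second case as ``the length of $(\varepsilon g,(1-\varepsilon)g)$ exceeds every $d_i$'' rather than your explicit greedy bound $S_k < 2\varepsilon g \leq (1-\varepsilon)g$, but these are the same observation, and both rely on $\varepsilon < \tfrac{1}{3}$ in exactly the way you identify.
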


\begin{proof}
Let $A \in A_{\text{S}}(q, g, \varepsilon)$ and suppose that $A$ is isogenous to $A_1 \times \cdots \times A_r$ where $A_1, \cdots, A_r$ are simple, $\text{dim}A_i=d_i$ with $d_1 \geq \cdots \geq d_r>0$. If $d_1 \geq \varepsilon g$, Then $B_1=A_1$ and $B_2=A_2 \times \cdots \times A_r$ satisfy the condition. If $d_1 < \varepsilon g$, then the length of an interval $(\varepsilon g, (1 - \varepsilon) g)$ is larger than each of $d_i$ ($1 \leq i \leq r$) so there exists a positive integer $1<k<r$ such that 
$$
\varepsilon g < \sum_{i=1}^{k}d_i < (1 - \varepsilon) g.
$$
In this case $B_1 = A_1 \times \cdots \times A_k$ and $B_2 = A_{k+1} \times \cdots \times A_r$ satisfy the statement of the lemma. 
\end{proof}

Define
\begin{center}
$a_{\text{L}}(q, g, \varepsilon) := \left | A_{\text{L}}(q, g, \varepsilon) \right |$ and 
$a_{\text{S}}(q, g, \varepsilon) := \left | A_{\text{S}}(q, g, \varepsilon) \right |$.
\end{center}
Then we have the following result.

\begin{theorem} \label{thm32}
\textup{(a)} $\log a_{\text{S}}(q, g, \varepsilon) < \frac{1}{4}(1-\varepsilon)g^2 \log q$ for sufficiently large $g$. \\
\textup{(b)} $\displaystyle \lim_{g \rightarrow \infty} \frac{a_{\text{L}}(q, g, \varepsilon)}{m_q(g)}=1$ and $\displaystyle \lim_{g \rightarrow \infty} \frac{a_{\text{S}}(q, g, \varepsilon)}{m_q(g)}=0$.
\end{theorem}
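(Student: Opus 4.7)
The plan is to combine the decomposition provided by Lemma \ref{lem31} with the known asymptotic (\ref{eqn1}), reducing the estimate to a simple quadratic optimization. By Lemma \ref{lem31}, every isogeny class in $A_{\text{S}}(q, g, \varepsilon)$ is represented by some product $B_1 \times B_2$ with $\dim B_i \in [\varepsilon g, (1 - \varepsilon) g]$, so a crude count of ordered pairs (which only overcounts) yields
$$
a_{\text{S}}(q, g, \varepsilon) \leq \sum_{d = \lceil \varepsilon g \rceil}^{\lfloor (1 - \varepsilon) g \rfloor} m_q(d) \cdot m_q(g - d).
$$

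I would then apply (\ref{eqn1}) uniformly across the summation range. Because both endpoints $\varepsilon g$ and $(1 - \varepsilon) g$ tend to infinity with $g$, for any fixed $\eta > 0$ and $g$ sufficiently large we have $m_q(d)\, m_q(g - d) \leq q^{\frac{1}{4}(d^2 + (g - d)^2)(1 + \eta)}$ for every $d$ in the range. The function $d \mapsto d^2 + (g - d)^2$ is convex with minimum at $d = g/2$, so on $[\varepsilon g, (1 - \varepsilon) g]$ it attains its maximum at the endpoints, where its value equals $(\varepsilon^2 + (1 - \varepsilon)^2) g^2 = \bigl((1 - \varepsilon) - \varepsilon(1 - 2\varepsilon)\bigr) g^2$. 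Setting $\delta := \varepsilon(1 - 2\varepsilon)$, which is strictly positive since $\varepsilon < \tfrac{1}{3} < \tfrac{1}{2}$, and taking $\eta$ small enough, the bound becomes
$$
a_{\text{S}}(q, g, \varepsilon) \leq g \cdot q^{\frac{1}{4}\left((1 - \varepsilon) - \delta/2\right) g^2}.
$$
Taking logarithms and absorbing the $\log g$ term into the exponent for large $g$ gives $\log a_{\text{S}}(q, g, \varepsilon) < \frac{1}{4}(1 - \varepsilon) g^2 \log q$, establishing (a).

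For part (b), dividing the bound from (a) by $m_q(g) = q^{\frac{1}{4}g^2(1 + o(1))}$ gives $a_{\text{S}}(q, g, \varepsilon) / m_q(g) \leq q^{-\frac{\delta}{8} g^2 (1 + o(1))} \to 0$; the complementary limit $a_{\text{L}}(q, g, \varepsilon)/m_q(g) \to 1$ then follows from the identity $a_{\text{L}}(q, g, \varepsilon) + a_{\text{S}}(q, g, \varepsilon) = m_q(g)$. The only mildly delicate point is to ensure the $o(1)$ in (\ref{eqn1}) is uniform over the range of $d$ being summed, but this is automatic because $\varepsilon g \to \infty$; the rest is just the one-variable convex maximum and a sum of $O(g)$ terms that is dominated by the exponential factor. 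Thus I expect no substantial obstacle beyond setting up the bookkeeping carefully.
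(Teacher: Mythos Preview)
Your argument is correct and follows essentially the same route as the paper: apply Lemma~\ref{lem31} to bound $a_{\text{S}}(q,g,\varepsilon)$ by $\sum_{\varepsilon g \le i \le (1-\varepsilon)g} m_q(i)m_q(g-i)$, insert the asymptotic~(\ref{eqn1}) uniformly (valid since $\varepsilon g \to \infty$), maximize the convex exponent $i^2+(g-i)^2$ at the endpoints, and absorb the factor of $g$ into the slack in the exponent. The only cosmetic difference is that the paper fixes a specific auxiliary constant $\varepsilon'$ so the exponent comes out to $\tfrac14(1-\tfrac43\varepsilon)g^2$, whereas you parameterize via $\delta=\varepsilon(1-2\varepsilon)$ and a small $\eta$; the content is the same.
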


\begin{proof}
(a) By Lemma \ref{lem31}, every element of $A_{\text{S}}(q, g, \varepsilon)$ can be represented by $B_1 \times B_2$ such that $\varepsilon g \leq \text{dim}B_1, \text{dim}B_2 \leq (1 - \varepsilon)g$ so 
$$
a_{\text{S}}(q, g, \varepsilon) \leq \sum_{\varepsilon g \leq i \leq (1 - \varepsilon)g}m_q(i)m_q(g-i).
$$
For $\displaystyle \varepsilon' := \frac{1-\frac{4}{3}\varepsilon}{(1-\varepsilon)^2+\varepsilon^2}-1>0$, 
$m_q(g) = q^{\frac{1}{4}g^2(1+o(1))}$ so there is $g_0$ such that for every $g \geq g_0$, 
$$
g \cdot q^{\frac{1}{4}g^2(1-\varepsilon)} < m_q(g) < q^{\frac{1}{4}g^2(1+\varepsilon')}. 
$$
Let $\displaystyle g \geq \frac{g_0}{\varepsilon}$. Then 
\begin{equation*}
\begin{split}
a_{\text{S}}(q, g, \varepsilon) & \leq \sum_{\varepsilon g \leq i \leq (1 - \varepsilon)g}q^{\frac{1}{4}i^2(1+\varepsilon')}q^{\frac{1}{4}(g-i)^2(1+\varepsilon')} \\
& \leq g \cdot q^{\frac{1}{4}((\varepsilon )^2 + (1-\varepsilon)^2)g^2(1+\varepsilon')} \\
& = g \cdot q^{\frac{1}{4}(1 - \frac{4}{3} \varepsilon)g^2}. 
\end{split}
\end{equation*}
Thus $\log a_{\text{S}}(q, g, \varepsilon) \leq \log g + \frac{1}{4}(1-\frac{4}{3}\varepsilon) g^2 \log q < \frac{1}{4}(1-\varepsilon)g^2 \log q$ for sufficiently large $g$. \\

\noindent (b) For $\displaystyle g \geq \frac{g_0}{\varepsilon}$, 
$$
a_{\text{S}}(q, g, \varepsilon) \leq g \cdot q^{\frac{1}{4}(1 - \frac{4}{3} \varepsilon)g^2} < q^{- \frac{1}{12} \varepsilon g^2} m_q(g)
$$
so $\displaystyle \lim_{g \rightarrow \infty} \frac{a_{\text{S}}(q, g, \varepsilon)}{m_q(g)}=0$ and
$\displaystyle \lim_{g \rightarrow \infty} \frac{a_{\text{L}}(q, g, \varepsilon)}{m_q(g)}=1$.
\end{proof} 


\section{Proportion of simple isogeny classes} \label{Sec4}

In this section we prove the following theorem based on the results of the previous sections. This is the main theorem of our paper. 

\begin{theorem} \label{thm41}
$$
\limsup_{g \rightarrow \infty}\frac{s_q(g)}{m_q(g)}=1.
$$
\end{theorem}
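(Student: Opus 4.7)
Since $s_q(g)\le m_q(g)$, one has $\limsup_g\sigma(g)\le 1$ where $\sigma(g):=s_q(g)/m_q(g)$. The plan is to exhibit a subsequence $g_n\to\infty$ along which $\sigma(g_n)\to 1$.

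First I would set up the key identity. For $\varepsilon\in(0,1/2)$, every class in $A_L(q,g,\varepsilon)$ has a \emph{unique} largest simple isogeny factor (the complementary part has strictly smaller dimension), giving
\[a_L(q,g,\varepsilon)=s_q(g)+\sum_{k=1}^{\lfloor\varepsilon g\rfloor}s_q(g-k)\,m_q(k).\]
Dividing by $m_q(g)$ and invoking Theorem~\ref{thm32}(b) yields
\[\sigma(g)+\sum_{k=1}^{\lfloor\varepsilon g\rflo
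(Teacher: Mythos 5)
Your setup is correct and coincides exactly with the paper's starting point: the trivial bound $\limsup \sigma(g)\le 1$, the identity
$$
a_{\text{L}}(q,g,\varepsilon)=s_q(g)+\sum_{1\le k\le \varepsilon g} m_q(k)\,s_q(g-k)
$$
(valid because for $\varepsilon<1/2$ the unique simple factor of dimension $>g/2$ determines the splitting), and the use of Theorem \ref{thm32}(b) to conclude that $\sigma(g)+\sum_{1\le k\le\varepsilon g} m_q(k)s_q(g-k)/m_q(g)\to 1$. But the proposal breaks off exactly where the real difficulty begins, and what remains is the entire content of the proof. You still must show that the correction term $\sum_{k\ge 1} m_q(k)s_q(g-k)/m_q(g)$ is small along some subsequence, and this does \emph{not} follow from a termwise estimate. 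The only available control on $s_q$ and $m_q$ is $q^{\frac14 g^2(1+o(1))}$; writing the $o(1)$ as $\pm\delta$, a single term with small $k$ is bounded only by $q^{C_1k^2}\,q^{\frac14(g-k)^2(1+\delta)}/q^{\frac14 g^2(1-\delta)}\approx q^{\frac{\delta}{2}g^2-\frac{1}{4}kg+\cdots}$, and the error $q^{\frac{\delta}{2}g^2}$ (quadratic in $g$) swamps the gain $q^{-\frac14 kg}$ (linear in $g$ for fixed $k$). This is precisely why the theorem asserts only a $\limsup$ and not a limit.

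The paper closes this gap by contradiction: assuming $s_q(g)\le c\,m_q(g)$ with $c<1$ for all large $g$ converts the identity into a genuine recursive inequality $s_q(g)\le C_2\sum_{1\le i\le \varepsilon g} q^{C_1 i^2}s_q(g-i)$, which is then substituted into itself $n=\lfloor\varepsilon g\rfloor$ times. Each substitution lowers the arguments of $s_q$ by at least one, so after $n$ steps $s_q(g+n)$ is bounded by roughly $(2C_2)^{n}q^{2C_1 n^2}\max_{g-n\le t\le g-1}s_q(t)$; since $2C_1 n^2\le \frac18\varepsilon g^2$ by the choice $\varepsilon=1/(16C_1)$, this accumulated loss is only $q^{\frac18\varepsilon g^2}$, while the lower bound $s_q(g+n)\ge q^{\frac14(g+n)^2(1-\varepsilon')}$ exceeds $q^{\frac14 g^2(1+\varepsilon')+\frac14\varepsilon g^2}$ for a suitably small explicit $\varepsilon'$, giving the contradiction. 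If you intend to complete your argument, you need either this iteration (or an equivalent device) or some other mechanism that beats the $o(1)$ in the exponent of Theorem \ref{thm23}; as written, the proof is not merely unfinished in its bookkeeping but missing its central idea.
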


\begin{proof}
Let $C_1 \geq 1$ be a constant such that $m_q(g) \leq q^{C_1 g^2}$ for any $g \geq 1$, and let $\displaystyle \varepsilon := \frac{1}{16C_1}>0$. Suppose that there exists $c<1$ such that $s_q(g) \leq c m_q(g)$ for sufficiently large $g$. 
Since
$$
\lim_{g \rightarrow \infty} \frac{a_{\text{L}}(q, g, \varepsilon)}{m_q(g)}=1
$$
by Theorem \ref{thm32} and
$$
a_{\text{L}}(q, g, \varepsilon) = s_q(g)+ \sum_{1 \leq i \leq \varepsilon g} m_q(i)s_q(g-i),
$$
there exist $C_2 \geq 1$ and $g_1>0$ such that 
\begin{equation} \label{eqnA1}
s_q(g) \leq C_2 \sum_{1 \leq i \leq \varepsilon g} m_q(i)s_q(g-i) \leq C_2 \sum_{1 \leq i \leq \varepsilon g} q^{C_1 i^2}s_q(g-i)
\end{equation}
for any $g \geq g_1$. In (\ref{eqnA1}), $s_q(g)$ is bounded by a linear combination of 
$$
s_q(g-1), \cdots, s_q(g-n)
$$ 
where $n := \left \lfloor \varepsilon g \right \rfloor$.

Now we prove that the asymptotic of $s_q(g)$ in Theorem \ref{thm23},
$$
s_q(g) = q^{\frac{1}{4}g^2(1+o(1))}
$$
makes a contradiction. If we replace each $s_q(g-i)$ in (\ref{eqnA1}) by $q^{\frac{1}{4}(g-i)^2}$, then
\begin{equation} \label{eqnA2}
C_2 \sum_{1 \leq i \leq \varepsilon g} q^{C_1 i^2} q^{\frac{1}{4}(g-i)^2}
\leq C_2  \sum_{1 \leq i \leq \varepsilon g} q^{\frac{1}{16}gi} q^{\frac{1}{4}(g-i)^2}
\leq C_2 \varepsilon g \cdot q^{\frac{1}{4}(g-1)^2+\frac{1}{16}g}
\end{equation}
and the right side of (\ref{eqnA2}) becomes smaller than $q^{\frac{1}{4}g^2}$ for sufficiently large $g$. This suggests that it would be possible to obtain a contradiction by iteratively bounding the numbers $s_q(g), s_q(g+1), \cdots $ by linear combinations of $s_q(g-1), s_q(g-2), \cdots, s_q(g-n)$. (For example, $s_q(g+1)$ is bounded by a linear combination of $s_q(g), \cdots, s_q(g-n+1)$ and $s_q(g)$ is bounded by a linear combination of $s_q(g-1), \cdots, s_q(g-n)$ so $s_q(g+1)$ is also bounded by a linear combination of $s_q(g-1), \cdots, s_q(g-n)$.)

For
$$
\varepsilon' := \frac{\varepsilon^2}{4+(2+\varepsilon)^2}>0,
$$
there exists $g_2>0$ such that for any $g \geq g_2$, 
$$
q^{\frac{1}{4}g^2(1-\varepsilon')} \leq s_q(g) \leq q^{\frac{1}{4}g^2(1+\varepsilon')}
$$
by Theorem \ref{thm23}. Denote 
$$a_{g,k} := s_q(g+k)$$
and 
$$b_{g,k} := \sum_{i=1}^{n} q^{C_1 i^2}s_q(g+k-i).$$ 
Then 
$$
a_{g,k} \leq C_2 b_{g,k}
$$
for $0 \leq k \leq n$ by (\ref{eqnA1}). 
Now suppose that  $g \geq \max\left \{ g_1, 2g_2, 2 \varepsilon^{-1} \right \}$ (so $n \geq 2$) and consider the inequality
\begin{equation} \label{eqn6}
a_{g,n}+\sum_{k=1}^{n} 2^{k-1}C_2^{k}q^{C_1 k^2} a_{g,n-k} \leq C_2 b_{g,n} +\sum_{k=1}^{n} 2^{k-1}C_2^{k+1}q^{C_1 k^2} b_{g,n-k}. 
\end{equation}
For $1 \leq k \leq n$, the coefficient of $s_q(g+n-k)$ in the left side of (\ref{eqn6}) is 
$$
2^{k-1}C_2^k q^{C_1k^2}
$$
and the coefficient of $s_q(g+n-k)$ in the right side of (\ref{eqn6}) is 
\begin{equation*}  \label{eqn8}
\begin{split}
& C_2 q^{C_1 k^2}+\sum_{j=1}^{k-1}(2^{j-1}C_2^{j+1}q^{C_1j^2})q^{C_1(k-j)^2} \\
\leq & \left ( 1+\sum_{j=1}^{k-1}2^{j-1} \right ) C_2^k q^{C_1k^2}=2^{k-1}C_2^k q^{C_1k^2}.
\end{split}
\end{equation*}
Now (\ref{eqn6}) provides an upper bound of $s_q(g+n)=a_{g,n}$. It is given by
\begin{equation} \label{eqn7}
\begin{split}
s_q(g+n) & \leq \sum_{k=1}^{n}\sum_{i=n-k+1}^{n}2^{k-1}C_2^{k+1}q^{C_1k^2}q^{C_1 i^2}s_q(g+n-k-i) \\
& \leq \frac{n(n+1)}{2} 2^{n-1}C_2^{n+1}q^{2C_1 n^2} \max\left \{ s_q(t) \mid g-n \leq t \leq g-1  \right \} \\
& \leq \frac{\varepsilon g(\varepsilon g+1)}{2} 2^{\varepsilon g-1}C_2^{\varepsilon g+1}q^{\frac{1}{8} \varepsilon g^2} q^{\frac{1}{4}g^2(1+\varepsilon')}
\end{split}
\end{equation}
(the last inequality is due to the fact that $\displaystyle g-n>\frac{g}{2} \geq g_2$). By the definition of $\varepsilon'$, 
$$
\frac{1}{4}(g+n)^2(1-\varepsilon')>\frac{1}{4}(g+\frac{\varepsilon g}{2})^2(1-\varepsilon')=\frac{1}{4}g^2(1+\varepsilon')+\frac{1}{4}\varepsilon g^2
$$
so
\begin{equation} \label{eqn8}
s_q(g+n) \geq q^{\frac{1}{4}(g+n)^2(1-\varepsilon')}
>q^{\frac{1}{4}g^2(1+\varepsilon')+\frac{1}{4}\varepsilon g^2}.
\end{equation}
By (\ref{eqn7}) and (\ref{eqn8}), we obtain
$$
q^{\frac{1}{4}\varepsilon g^2} < \frac{\varepsilon g(\varepsilon g+1)}{2} 2^{\varepsilon g-1}C_2^{\varepsilon g+1}q^{\frac{1}{8}\varepsilon g^2}, 
$$
which is a contradiction when $g$ is large enough. 
\end{proof}

\vspace{2mm}


\section*{Acknowledgments}

This work was partially supported by Samsung Science and Technology Foundation (SSTF-BA1802-03) and National Research Foundation of Korea (NRF-2018R1A4A1023590). The author would like to thank Sungmun Cho, Dong Uk Lee, Donghoon Park and Jacob Tsimerman for their interest and helpful comments. The author also deeply thank the anonymous referee for their comments that improved the exposition of the paper. 


\vspace{3mm}

\footnotesize{
\textsc{Jungin Lee: Department of Mathematics, Pohang University of Science and Technology, 77 Cheongam-ro, Nam-gu, Pohang, Gyeongbuk, Republic of Korea 37673.} 

\textit{E-mail address}: \changeurlcolor{black}\href{mailto:moleculesum@postech.ac.kr}{moleculesum@postech.ac.kr} 

\end{document}